\newcounter{alphthm}
\newtheorem{prop}{Proposition}[section]
\newtheorem{thm}{Theorem}[section]
\newtheorem{lem}[thm]{Lemma}
\newtheorem{cor}{Corollary}[section]
\theoremstyle{definition}
\newtheorem{ex}{Example}
\newcommand{\be}{\begin{equation}}
\newcommand{\ee}{\end{equation}}
\newcommand{\ben}{\begin{enumerate}}
\newcommand{\een}{\end{enumerate}}
\newcommand{\pa}{{\partial}}
\newcommand{\g}{{\bf g}}
\newcommand{\pxi}{{\pa \over \pa x^i}}
\title{Generalized Isotropic Berwald Manifolds}
\author{A. Tayebi and E. Peyghan}
\begin{document}

\maketitle
\begin{abstract}
In this paper, we construct a new class of Finsler manifolds called generalized isotropic Berwald  manifolds  which is an extension of  the class of isotropic Berwald  manifolds. We prove that every generalized isotropic Berwald  manifold is a generalized Douglas-Weyl manifold. On a compact generalized isotropic Berwald manifold, we show that the notions of stretch  and Landsberg curvatures are equivalent. Then we prove that on these manifolds,  a Finsler metric  is  R-quadratic if and only if it is a stretch metric with vanishing  ${\bar E}$-curvature. Finally,  we determine the flag curvature of generalized isotropic Berwald manifold with scalar flag curvature.\\\\
{\bf {Keywords}}: Generalized Douglas-Weyl metric, Berwald metric.\footnote{ 2000 Mathematics subject Classification: 53C60, 53C25.}
\end{abstract}
\section{Introduction}
In Finsler geometry,  there are several important non-Riemannian  quantities. Let $(M, F)$ be a Finsler manifold. The second derivatives of ${1\over 2} F_x^2$ at $y\in T_xM_0$ is an inner product $\g_y$ on $T_xM$.  The third order derivatives of ${1\over 2} F_x^2$ at  $y\in T_xM_0$ is a symmetric trilinear forms ${\bf C}_y$ on $T_xM$. We call $\g_y$ and ${\bf C}_y$ the  fundamental form and  the Cartan torsion, respectively. The rate of change of the Cartan torsion along geodesics is the  Landsberg curvature  $ {\bf L}_y$ on $T_xM$ for any $y\in T_xM_0$.  $F$ is said to be  Landsbergian if ${\bf L}=0$.

For a Finsler metric $F=F(x,y)$, its geodesics curves are characterized by the system of differential equations $ \ddot c^i+2G^i(\dot c)=0$, where the local functions $G^i=G^i(x, y)$ are called the spray coefficients and given by following
\[
G^i=\frac{1}{4}g^{il}\Big\{\frac{\partial^2[F^2]}{\partial x^k \partial y^l}y^k-\frac{\partial[F^2]}{\partial x^l}\Big\},\ \ y\in T_xM.
\]
$F$ is  called a Berwald metric if $G^i = {1\over 2} \Gamma^i_{jk}(x)y^jy^k$ is quadratic in $y\in T_xM$  for any $x\in M$ \cite{Be}. In \cite{Ich}, it is  proved that on  a Berwald space,  the parallel translation along any geodesic preserves the Minkowski  functionals. Then Berwald spaces can be viewed as  Finsler spaces modeled on a single Minkowski space.

Recently the various interesting special forms of Cartan, Berwald  and Landsberg tensors have been obtained by some Finslerians. The Finsler spaces having such special forms have been called C-reducible, P-reducible, semi-C-reducible, isotropic Berwald curvature, isotropic mean Berwald curvature, and  isotropic Landsberg curvature, etc  \cite{ChSh}\cite{Mat}\cite{PT}\cite{PTH}\cite{TN0}\cite{TP}.


In  \cite{ChSh}, Shen-Chen by using the structure of Funk metric, introduce the notion of isotropic Berwald metrics. This motivates us to study special forms of Berwald  curvature for other important special Finsler metrics.

Let $(M, F)$ be a two-dimensional Finsler manifold. We refer to the Berwald's frame $(\ell^i, m^i)$ where $\ell^i=y^i/F(y)$,  $m^i$ is the unit vector with $\ell_i m^i=0$ and  $\ell_i=g_{ij}\ell^i$.  Then the Berwald curvature is given by
\[
B^i_{\ jkl}=F^{-1}(-2I_{,1}\ell^i+I_2m^i) m_j m_k m_l,
\]
where $I$ is 0-homogeneous function called the main scalar of $F$ and $I_2=I_{,2}+I_{,1|2}$ (see page 689 in \cite{An}). Since the Cartan tensor of $F$ is given by $C_{ijk}=F^{-1}I m_im_jm_k$, then  the Berwald curvature  can be written as folowing
\begin{equation}\label{3}
B^i_{\ jkl}=-\frac{2I_{,1}}{I}C_{jkl}\ell^i+\frac{I_2}{3F}\{h_{jk}h^i_l+h_{kl}h^i_j+h_{lj}h^i_k\},
\end{equation}
where $h_{ij}:=m_i m_j$ is the angular metric.

Let $(M, F)$ be a Finsler manifold. Then $F$ is said to  be {\it generalized isotropic Berwald metric} if its Berwald curvature satisfies  following
\begin{equation}\label{B0}
B^i_{\ jkl}=\mu C_{jkl}\ell^i+\lambda(h^i_j h_{kl}+h^i_k h_{jl}+h^i_l h_{jk}),
\end{equation}
where $\mu=\mu(x,y)$ and $\lambda=\lambda(x,y)$ are homogeneous functions of degrees 0 and -1 with respect to $y$, respectively. Then $(M, F)$ is called a generalized isotropic Berwald manifold. It is remarkable that, if $\mu=2c$ and $\lambda=cF^{-1}$, where $c=c(x)$ is a scalar function on $M$, then $F$ reduces to a isotropic Berwald metric \cite{TN2}.  Then the  class of generalized isotropic Berwald  manifolds  contains the class of  isotropic Berwald  manifolds, as a special case. By (\ref{B0}),  it results that every Finsler surface has generalized isotropic Berwald curvature with $\mu=\frac{-2I_{,1}}{I}$ and  $\lambda=\frac{I_2}{3}$.
\begin{ex}
Consider the   Funk metric on the unit ball $\mathbb{B}^n\subset\mathbb{R}^n$ defined by
\[
F(x, y):=\frac{\sqrt{|y|^2-(|x|^2|y|^2-<x,y>^2)}+<x,y>}{1-|x|^2},\ \ \
y\in T_x\mathbb{B}^n=\mathbb{R}^n
\]
where $|.|$ and $<,>$ denote the Euclidean norm and inner product in $\mathbb{R}^n$, respectively. $F$ is a generalized isotropic Berwald metric with  $\mu=1$ and  $\lambda=\frac{1}{2F}$.
\end{ex}

The Douglas tensor is another non-Riemanian curvature which  defined by
\[
D^i_{\ jkl}:=\big(G^i-\frac{1}{n+1}\frac{\partial G^m}{\partial y^m}y^i\big)_{y^j y^k y^l}.
\]
For more details see \cite{BM}\cite{NT}.  A Finsler metric is called a generalized Douglas-Weyl (GDW) metric if the Douglas tensor satisfy in $h^i_{\alpha} D^\alpha_{\ jkl|m}y^m=0$ \cite{NST1}. In \cite{BP}, B\'{a}cs\'{o}-Papp show that this class of Finsler metrics is closed under projective transformation. In this paper, we prove the following.
\begin{thm}\label{THM1}
Every  generalized isotropic Berwald metric is a generalized Douglas-Weyl metric.
\end{thm}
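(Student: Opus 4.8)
The plan is to pass through the mean Berwald (or $E$-) curvature $E_{jk}:=\frac12 B^m_{\ jkm}$, which links the Berwald and Douglas tensors. Differentiating $G^i-\frac{1}{n+1}(\pa G^m/\pa y^m)y^i$ three times and invoking Euler's theorem yields the standard decomposition
\be
D^i_{\ jkl}=B^i_{\ jkl}-\frac{2}{n+1}\Big\{E_{jk}\delta^i_l+E_{kl}\delta^i_j+E_{lj}\delta^i_k+y^i\frac{\pa E_{jk}}{\pa y^l}\Big\}.
\ee
So the first step is to read off $E_{jk}$ from the defining relation (\ref{B0}). Contracting (\ref{B0}) over $i=l$ and using the standard identities $C_{jkm}\ell^m=0$, $h^m_{\ j}h_{mk}=h_{jk}$ and $h^m_{\ m}=n-1$, the Cartan term drops out and the $\lambda$-term collapses, giving $B^m_{\ jkm}=(n+1)\lambda\, h_{jk}$ and hence $E_{jk}=\frac{n+1}{2}\lambda\, h_{jk}$.

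The second step is to substitute (\ref{B0}) together with this value of $E$ into the decomposition above. Writing $h^i_{\ j}=\delta^i_j-\ell^i\ell_j$ splits the $\lambda$-part of the Berwald curvature into $\lambda(\delta^i_jh_{kl}+\delta^i_kh_{jl}+\delta^i_lh_{jk})$ plus a term carrying a factor $\ell^i$. Since $\frac{2}{n+1}E_{jk}=\lambda h_{jk}$, the first batch cancels exactly against the three $E\delta$-terms, and everything that survives — the $\mu C_{jkl}\ell^i$ term, the $\ell^i$-remainder, and the $y^i\,\pa_{y^l}E_{jk}$ term — is proportional to $y^i$ (recall $\ell^i=y^i/F$). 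I therefore expect to reach
\be
D^i_{\ jkl}=y^i\,T_{jkl}
\ee
for a tensor $T_{jkl}$ homogeneous of degree $-2$; as a consistency check, $T_{jkl}y^l=0$ should follow from $C_{jkl}y^l=0$, $h_{jk}\ell^k=0$ and Euler's relation $(\pa E_{jk}/\pa y^l)y^l=-E_{jk}$, matching the known identity $D^i_{\ jkl}y^l=0$.

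Once the factorization $D^i_{\ jkl}=y^iT_{jkl}$ is in hand, the GDW condition becomes almost formal. Because $y^i$ is horizontally parallel ($y^i_{\ |m}=0$), covariant differentiation gives $D^\alpha_{\ jkl|m}=y^\alpha\,T_{jkl|m}$, whence $D^\alpha_{\ jkl|m}y^m=y^\alpha(T_{jkl|m}y^m)$; contracting with the angular projector then yields $h^i_{\ \alpha}D^\alpha_{\ jkl|m}y^m=(h^i_{\ \alpha}y^\alpha)\,T_{jkl|m}y^m=0$, since $h^i_{\ \alpha}y^\alpha=0$, which is exactly the defining equation of a GDW metric. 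The one genuinely delicate point I anticipate is the bookkeeping in the second step — confirming that every $\delta^i$-term cancels and that the overall $y^i$ can be extracted cleanly, in particular handling the $y^i\,\pa_{y^l}E_{jk}$ contribution (which needs the derivative of $\lambda h_{jk}$, not merely of $\lambda$). After $D^i_{\ jkl}=y^iT_{jkl}$ is established the conclusion is immediate, independent of the dimension $n$, and does not require knowing $T_{jkl}$ explicitly.
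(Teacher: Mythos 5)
Your proof is correct and shares its core with the paper's: take the trace of (\ref{B0}) to get $E_{jk}=\tfrac{n+1}{2}\lambda h_{jk}$, substitute back into the Douglas tensor, conclude $D^i_{\ jkl}=y^iT_{jkl}$ for some tensor $T_{jkl}$, and finish by horizontal differentiation using $y^i_{\ |m}=0$. The packaging differs in two ways, both making your argument more economical. First, the paper reaches the factorization only after converting the Cartan term via $\mu C_{jkl}=-2F^{-1}L_{jkl}$ (contraction with $y_i$) and then invoking the symmetry $D^i_{\ jkl}=D^i_{\ jlk}$ to kill the term $(\lambda_{,l}+\lambda y_lF^{-2})h_{jk}y^i$, arriving at the clean identity (\ref{TP17}); you bypass both steps by simply absorbing every surviving term (including the $\partial E_{jk}/\partial y^l$ contribution, which you correctly flag as the derivative of $\lambda h_{jk}$ rather than of $\lambda$ alone) into $T_{jkl}$, which is all the GDW condition requires. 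The paper's extra work is not wasted, however: (\ref{TP17}) and the constraint on $\lambda$ are exactly what drive Proposition \ref{Prop1}, the Douglas characterization. Second, where the paper cites Lemma \ref{Lem2} from \cite{NST1} to pass from $D^i_{\ jkl|s}y^s=T_{jkl}y^i$ to the GDW property, you prove the required (easy) direction of that lemma on the spot, via $D^\alpha_{\ jkl|m}y^m=y^\alpha\,T_{jkl|m}y^m$ and $h^i_{\ \alpha}y^\alpha=0$, so your argument is self-contained; your consistency check $T_{jkl}y^l=0$ via Euler's relation is also correct.
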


As a generalization of Landsberg curvature, L. Berwald introduced a non-Riemannian curvature so-called stretch curvature and denoted by ${\bf \Sigma}_y$ \cite{Be}. He  showed that this tensor vanishes if and only if the length of a vector remains unchanged under the parallel displacement along an infinitesimal parallelogram. Therefore,  we study  complete generalized isotropic Berwald manifold with vanishing stretch curvature and prove the following.

\begin{thm}\label{THM2}
Let $(M, F)$ be a complete generalized isotropic Berwald manifold and $\mu$ be bounded function  on $M$. Suppose that $F$ has vanishing stretch curvature. Then $F$ is a Landsberg metric.
\end{thm}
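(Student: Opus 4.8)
The plan is to turn the pointwise curvature hypothesis into a scalar Riccati equation along geodesics and then kill its nonzero solutions using completeness and the boundedness of $\mu$. First I would record the basic reduction. Since $L_{jkl}=-\frac12 y_iB^i{}_{\,jkl}$, contracting the defining identity (\ref{B0}) with $y_i$ and using $y_i\ell^i=F$ together with $y_ih^i_j=0$ (which annihilates the whole $\lambda$-term) gives
\[
L_{jkl}=-\tfrac12\,\mu F\,C_{jkl}.
\]
Hence a generalized isotropic Berwald metric behaves like a relatively isotropic Landsberg metric, except that the proportionality factor $\mu$ may depend on $y$. In particular $F$ is Landsbergian precisely when $\mu=0$ on the set where $C\neq0$, so the whole theorem reduces to proving $\mu\equiv0$ there.

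Next I would extract the consequence of the stretch hypothesis. Recall that the stretch curvature is $\Sigma_{ijkl}=2(L_{ijk|l}-L_{ijl|k})$. Contracting with $y^l$ and using the two general identities $L_{ijl}y^l=0$ and $y^l_{|k}=0$ (so that $L_{ijl|k}y^l=(L_{ijl}y^l)_{|k}=0$), I get $\Sigma_{ijkl}y^l=2L_{ijk|l}y^l$. Therefore vanishing stretch curvature yields the much more tractable statement $L_{ijk|l}y^l=0$; that is, $L$ is horizontally parallel along every geodesic.

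Now I would differentiate the reduction along a unit-speed geodesic. Applying $(\,\cdot\,)_{|m}y^m$ to $L_{jkl}=-\frac12\mu F C_{jkl}$ and using $F_{|m}y^m=0$ together with the fundamental identity $C_{jkl|m}y^m=L_{jkl}$, then substituting the reduction back in and using $L_{jkl|m}y^m=0$ from the previous step, I arrive at $(-\tfrac12\,\mu_{|m}y^m+\tfrac14\mu^2F)\,C_{jkl}=0$. The same substitution in $C_{jkl|m}y^m=L_{jkl}$ shows that, in a horizontally parallel frame along the geodesic, the components of $C$ satisfy the linear ODE $\dot C_{abc}=-\frac12\mu C_{abc}$; by uniqueness this forces $C$ to be either identically zero or nowhere zero along that geodesic. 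On a geodesic where $C\not\equiv0$ we may therefore cancel $C_{jkl}$ and obtain, in arc-length parameter $t$, the Riccati equation $\dot\mu=\frac12\mu^2$ for the function $t\mapsto\mu(c(t),\dot c(t))$ (the precise constant depends on sign conventions but is nonzero in all of them, which is all that matters).

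Finally I would invoke the global hypotheses. Every nonzero solution of $\dot\mu=\frac12\mu^2$ has the form $\mu(t)=-2/(t-a)$ and blows up in finite time. By completeness each such geodesic is defined for all $t\in\mathbb{R}$, while boundedness of $\mu$ forbids any blow-up; hence $\mu\equiv0$ along every geodesic on which $C\not\equiv0$, and trivially $L=0$ along the geodesics on which $C\equiv0$. Since every tangent direction lies on a geodesic, $L=-\frac12\mu FC$ vanishes identically and $F$ is a Landsberg metric. I expect the main obstacle to be the second step—recognizing that the full tensorial stretch condition collapses, after contraction with $y$, to the single relation $L_{ijk|l}y^l=0$—together with the bookkeeping needed to rule out the locus $C=0$, which the linear ODE for $C$ handles cleanly.
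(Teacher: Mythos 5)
Your proposal is correct and follows essentially the same path as the paper's proof: contract the defining identity (\ref{B0}) with $y_i$ to get $L_{jkl}=-\tfrac12\mu F C_{jkl}$, combine this with $\Sigma=0$ to obtain the scalar relation $\big(2\mu'-\mu^2F\big)C_{jkl}=0$, and then solve the resulting Riccati equation $2\dot\mu=\mu^2$ along complete unit-speed geodesics, where boundedness of $\mu$ rules out the finite-time blow-up of every nonzero solution. Your handling of the locus $C=0$ (the linear ODE $\dot C_{abc}=-\tfrac12\mu C_{abc}$ in a parallel frame forces $C$ to be identically zero or nowhere zero along each geodesic) is in fact more careful than the paper's, which disposes of this issue by the cruder global dichotomy ``$F$ Riemannian'' versus ``$F$ non-Riemannian.''
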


The second variation of geodesics gives rise to a family of linear maps $R_y: T_xM\rightarrow T_xM$,
at any point $y\in T_xM$, which is called the Riemann curvature in the direction $y$. A Finsler metric $F$ is said to be R-quadratic if the Riemannian curvature $R_y$ is quadratic in $y\in T_xM$ at each point $x\in M$.  In the sense of B$\acute{a}$cs$\acute{o}$-Matsumoto, $F$ is R-quadratic if and only if the h-curvature of Berwald connection depends on position only (\cite{BM3}\cite{TBN}\cite{TP}). Every Berwald metric and
R-flat metric is R-quadratic metric. On the other hand, in \cite{ShDiff}  Shen find a  new non-Riemannian quantity for Finsler metrics that  is closely related to the  ${E}$-curvature and call it ${\bar E}$-curvature. Recall that, the  ${\bar E}$-curvature is obtained from the mean Berwald curvature $E$, by the  horizontal covariant differentiation  along geodesics. In this paper, we study generalized isotropic Berwald  manifolds with R-quadratic metrics and prove the following.
\begin{thm}\label{THM3}
Let $(M, F)$ be a generalized isotropic Berwald manifold of dimension $n>2$. Then $F$ is  R-quadratic if and only if it is a stretch metric with \  ${\bf \bar E}=0$.
\end{thm}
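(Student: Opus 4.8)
The plan is to turn both sides of the equivalence into algebraic conditions on the functions $\mu$ and $\lambda$ and their horizontal derivatives, read off from (\ref{B0}). First I would contract (\ref{B0}) with $y_i=g_{ip}y^p$: since $y_i\ell^i=F$, $y_ih^i_j=0$ and $C_{jkl}y^l=0$, the angular part drops and, using $L_{jkl}=-\tfrac12 y_iB^i_{\ jkl}$, I obtain $L_{jkl}=-\tfrac{\mu F}{2}C_{jkl}$. Taking instead the trace $E_{jk}=\tfrac12 B^m_{\ jkm}$ and using $C_{jkm}\ell^m=0$, $h^m_jh_{km}=h_{jk}$ and $h^m_m=n-1$ gives $E_{jk}=\tfrac{n+1}{2}\lambda\,h_{jk}$. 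Differentiating along geodesics, with $\ell^i_{|m}=0$, $F_{|m}=0$, $h_{jk|0}=0$ and $C_{jkl|0}=L_{jkl}$, I find that ${\bf \bar E}=0$ is equivalent to $\lambda_{|0}=0$, while the stretch condition ${\bf \Sigma}=0$, i.e. $L_{jkl|m}=L_{jkm|l}$, becomes a relation between $\mu_{|m}C_{jkl}$ and $\mu C_{jkl|m}$.

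Next I would use the B\'{a}cs\'{o}--Matsumoto characterization recalled in the introduction: $F$ is R-quadratic iff the hh-curvature $H^i_{\ jkl}$ of the Berwald connection is independent of $y$. The Bianchi identity for the Berwald connection expresses $\partial H^i_{\ jkl}/\partial y^m$ through the antisymmetrized horizontal derivative of the Berwald curvature (together with terms quadratic in $B$), so R-quadraticity becomes a symmetry requirement on $B^i_{\ jkl|m}$. The key computation is the explicit horizontal derivative of (\ref{B0}); using $h^i_{j|m}=0$, $h_{kl|m}=-2L_{klm}$ and $L=-\tfrac{\mu F}{2}C$ it takes the form
\[
B^i_{\ jkl|m}=\big(\mu_{|m}C_{jkl}+\mu C_{jkl|m}\big)\ell^i+\lambda_{|m}\big(h^i_jh_{kl}+h^i_kh_{jl}+h^i_lh_{jk}\big)+\lambda\mu F\big(h^i_jC_{klm}+h^i_kC_{jlm}+h^i_lC_{jkm}\big),
\]
which splits $B^i_{\ jkl|m}$ into an $\ell^i$-channel carrying the $(\mu,C)$ data and an angular channel carrying the $(\lambda,h,C)$ data.

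For the forward implication I would isolate the two channels by contraction. Contracting the symmetry $B^i_{\ jkl|m}=B^i_{\ jkm|l}$ with $y_i$ annihilates the angular channel and, since $y_iB^i_{\ jkl}=-2L_{jkl}$, returns $L_{jkl|m}=L_{jkm|l}$, that is ${\bf \Sigma}=0$. Taking the trace over $i$ and $j$ instead kills the $\lambda\mu F$ terms (because $h^j_lC_{jkm}=h^j_mC_{jkl}=C_{klm}$) and leaves a multiple of $\lambda_{|m}h_{kl}-\lambda_{|l}h_{km}$; contracting with $y^m$ yields $\lambda_{|0}=0$, i.e. ${\bf \bar E}=0$. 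Thus an R-quadratic metric in this class is both a stretch metric and has vanishing ${\bf \bar E}$.

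The converse is where the work lies, and I expect the angular channel to be the main obstacle. Assuming ${\bf \Sigma}=0$ and ${\bf \bar E}=0$, the $\ell^i$-channel is automatically symmetric, and everything reduces to showing that the angular channel of $B^i_{\ jkl|m}$ is symmetric as well. Here the hypothesis $n>2$ is essential: one must verify that the R-quadratic condition restricted to the angular directions is governed exactly by $\lambda_{|m}h_{kl}-\lambda_{|l}h_{km}$ and is not strictly stronger than ${\bf \bar E}=0$. This rests on the algebraic independence, for $n>2$, of the symmetric angular tensor $h^i_jh_{kl}+h^i_kh_{jl}+h^i_lh_{jk}$ and the mixed tensor $h^i_lC_{jkm}-h^i_mC_{jkl}$, together with careful tracking of the quadratic-in-$B$ terms of the Bianchi identity; in dimension two this independence fails, since by (\ref{3}) the Cartan torsion and the angular metric are proportional through the main scalar, which is precisely why the clean splitting into the two conditions ${\bf \Sigma}=0$ and ${\bf \bar E}=0$ is special to $n>2$.
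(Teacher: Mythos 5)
Your computation of $B^i_{\ jkl|m}$ is correct, and your overall strategy (use the Bianchi identity (\ref{TP23}) to turn R-quadraticity into a symmetry of the horizontally differentiated Berwald curvature, then split into an $\ell^i$-channel and an angular channel) is essentially the paper's. The fatal problem is your identification of ${\bf \bar E}=0$ with $\lambda_{|0}=0$. In this theorem $\bar E$ is the three-index tensor $\bar E_{jkl}=E_{jk|l}$, i.e.\ the full horizontal covariant derivative of the mean Berwald curvature, not its derivative along geodesics $H_{jk}=E_{jk|m}y^m$; the latter is Mo's ${\bf H}$-curvature, which enters the paper's proof only as an auxiliary tool through the lemma that R-quadratic implies ${\bf H}=0$. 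Since $E_{jk}=\frac{n+1}{2}\lambda h_{jk}$ and $h_{jk|l}=-2L_{jkl}=\mu F C_{jkl}$, one has $\bar E_{jkl}=\frac{n+1}{2}\bigl(\lambda_{|l}h_{jk}+\lambda\mu F C_{jkl}\bigr)$, so ${\bf \bar E}=0$ is strictly stronger than $\lambda_{|0}=0$: for $n>2$ it is equivalent to $\lambda_{|l}=0$ \emph{and} $\lambda\mu C_{jkl}=0$. Your forward argument therefore proves too little. The trace over $i$ and $j$ that you take kills, as you yourself note, exactly the $\lambda\mu F$ terms, so it can never detect the $\lambda\mu F C_{jkl}$ part of $\bar E$; it actually yields $\lambda_{|m}h_{kl}=\lambda_{|l}h_{km}$, hence $\lambda_{|m}=0$ for all $m$ when $n>2$ (more than you extract), but even that is not ${\bf\bar E}=0$. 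You need a second, different contraction of the angular channel --- for instance setting $i=l$, which gives $(n-2)\lambda\mu F C_{jkm}=0$ --- to finish the forward direction.

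The converse suffers from the same misreading: with $\bar E$ interpreted as $\lambda_{|0}=0$, the hypotheses ${\bf\Sigma}=0$ and $\lambda_{|0}=0$ genuinely do not make the angular channel symmetric (the terms $\lambda_{|m}(\cdots)-\lambda_{|l}(\cdots)$ and $\lambda\mu F(h^i_lC_{jkm}-h^i_mC_{jkl})$ survive), which is why you are forced to leave this direction as an unresolved ``obstacle''; with your reading the statement is not provable this way. With the correct reading the converse is immediate: ${\bf\bar E}=0$ kills the entire angular channel, ${\bf\Sigma}=0$ kills the $\ell^i$-channel, and (\ref{TP23}) then gives $R^i_{\ jkl,m}=0$, i.e.\ R-quadraticity. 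Two smaller points: the relevant identity (\ref{TP23}) contains no quadratic-in-$B$ terms (those occur only in (\ref{TP22}), the identity for $R^i_{\ jkl|m}$), so the ``careful tracking'' you anticipate is unnecessary; and the paper avoids all of the $\lambda\mu F C$ bookkeeping by writing $B^i_{\ jkl}=-2F^{-1}L_{jkl}\ell^i+\frac{2}{n+1}(E_{jk}h^i_l+E_{kl}h^i_j+E_{jl}h^i_k)$ in terms of the mixed tensors $h^i_j$, which satisfy $h^i_{j|s}=0$, so that the horizontal derivative decomposes directly into $L_{jkl|s}$ and $E_{jk|s}$; the forward direction is then completed by invoking ${\bf H}=0$ and an index-permutation argument leading to $(n-2)E_{jk|l}=0$.
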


For a Finsler manifold $(M, F)$, the flag curvature is  a function ${\bf K}(P, y)$ of tangent planes $P\subset T_xM$ and  directions $y\in P$. Indeed the flag curvature in Finsler geometry is a natural extension of the sectional curvature in Riemannian geometry. $F$  is said to be  of scalar flag curvature if the flag curvature ${\bf K}(P, y)={\bf K}(x, y)$ is independent of flags $P$ associated with any fixed flagpole $y$.  One of the important problems in Finsler geometry is to characterize Finsler manifolds of scalar flag curvature \cite{NT}. In this paper, we  study generalized isotropic Berwald metrics of scalar curvature  and   partially determine the flag curvature. More precisely, we prove the following.

\begin{thm}\label{THM4}
Let $(M, F)$ be an n-dimensional generalized isotropic Berwald manifold of scalar flag curvature. Then the flag curvature ${\bf K}={\bf K}(x, y)$  satisfies
\be
{n+1 \over 3} {\bf K}_{y^k} + \Big ({\bf K}+\frac{\mu^2}{4}-\frac{\mu'}{2F}\Big ) I_k=0,\label{KKc}
\ee
where $\mu':=\mu_{|s}y^s$ denotes the horizontal derivation of $\mu$ with respect to the Berwald connection.
\end{thm}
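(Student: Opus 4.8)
The plan is to read the Landsberg curvature off the defining equation (\ref{B0}) and then feed it into the standard differential identity satisfied by metrics of scalar flag curvature. First I would contract (\ref{B0}) with $y_i$. Since $y_i\ell^i=F$ while $y_ih^i_j=y_j-F\ell_j=0$, all three $\lambda$-terms vanish and only the first survives, giving $y_iB^i_{\ jkl}=\mu F\,C_{jkl}$. Recalling that the Landsberg curvature is $L_{jkl}=-\tfrac12 y_iB^i_{\ jkl}$, this yields
\be
L_{jkl}=-\frac{\mu F}{2}\,C_{jkl}.
\ee
Tracing with $g^{kl}$ and writing $I_i$, $J_i$ for the mean Cartan and mean Landsberg torsions, I obtain $J_i=-\tfrac{\mu F}{2}I_i$. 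Thus the generalized isotropic Berwald hypothesis enters the argument only through this one scalar relation and its horizontal derivative, and in particular only the \emph{mean} (traced) quantities will appear, which matches the fact that (\ref{KKc}) involves $I_k$ rather than the full Cartan tensor.

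Next I would invoke the governing identity for a Finsler metric of scalar flag curvature ${\bf K}={\bf K}(x,y)$. Here the Riemann curvature is $R^i_{\ k}={\bf K}(F^2\delta^i_k-y^iy_k)$, and a well-known consequence, obtained by tracing the Bianchi identity that links the Riemann and Landsberg curvatures, is
\be
J_{i|s}y^s+\frac{n+1}{3}F^2{\bf K}_{y^i}+F^2{\bf K}\,I_i=0,
\ee
where $|$ denotes horizontal covariant differentiation with respect to the Berwald connection. I would take this relation as the backbone of the proof.

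Finally I would combine the two. Since the paper's own definition identifies the Landsberg curvature with the rate of change of the Cartan torsion along geodesics, we have $I_{i|s}y^s=J_i$; moreover $F_{|s}y^s=0$ because $F$ is parallel along geodesics. Differentiating $J_i=-\tfrac{\mu F}{2}I_i$ horizontally in the geodesic direction and using these two facts together with $\mu'=\mu_{|s}y^s$ gives
\be
J_{i|s}y^s=-\frac{\mu' F}{2}I_i-\frac{\mu F}{2}J_i=\Big(\frac{\mu^2F^2}{4}-\frac{\mu' F}{2}\Big)I_i .
\ee
Substituting this into the governing identity and dividing by $F^2$ rearranges at once into the claimed formula (\ref{KKc}). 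The main obstacle is not the algebra but securing the governing identity with the correct normalization and sign, i.e.\ establishing that for scalar flag curvature the traced Bianchi identity takes exactly the above form; once that and the conventions $I_{i|s}y^s=J_i$ and $F_{|s}y^s=0$ are in place, everything else is a short computation driven by $J_i=-\tfrac{\mu F}{2}I_i$ and its horizontal derivative.
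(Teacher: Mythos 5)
Your proposal is correct and follows essentially the same route as the paper: both contract (\ref{B0}) with $y_i$ to get $L_{jkl}=-\tfrac{\mu F}{2}C_{jkl}$, trace to obtain $J_i=-\tfrac{\mu F}{2}I_i$, differentiate this horizontally along the geodesic direction using $I_{i|s}y^s=J_i$ and $F_{|s}y^s=0$, and substitute into the scalar-flag-curvature identity $J_{k|m}y^m=-\tfrac{1}{3}F^2\{(n+1){\bf K}_{y^k}+3{\bf K}I_k\}$. The ``governing identity'' you take as the backbone is exactly the paper's equation (\ref{AZeq2}), which it derives by contracting the Mo--Shen identity (\ref{Moeq1}) with $g^{ij}$ and imposing $R^i_{\ k}={\bf K}F^2h^i_k$, so your treating it as a known traced Bianchi-type identity matches the paper's reliance on \cite{MoSh}.
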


There are many connections in Finsler geometry   \cite{TAE}\cite{TN}. In this paper, we use the Berwald connection  and the $h$- and $v$- covariant derivatives of a Finsler tensor field are denoted by `` $|$ " and ``, " respectively.
\section{Preliminaries}
Let $M$ be a n-dimensional $ C^\infty$ manifold. Denote by $T_x M $ the tangent space at $x \in M$,  by $TM=\cup _{x \in M} T_x M $ the tangent bundle of $M$, and by $TM_{0} = TM \setminus \{ 0 \}$ the slit tangent bundle on $M$. A  Finsler metric on $M$ is a function $ F:TM \rightarrow [0,\infty)$ which has the following properties:\\
(i) $F$ is $C^\infty$ on $TM_{0}$;\\
(ii) $F$ is positively 1-homogeneous on the fibers of tangent bundle $TM$;\\
(iii) for each $y\in T_xM$, the following quadratic form ${\bf g}_y$ on
$T_xM$  is positive definite,
\[
{\bf g}_{y}(u,v):={1 \over 2} \frac{\partial^2}{\partial s \partial t}\left[  F^2 (y+su+tv)\right]|_{s,t=0}, \ \
u,v\in T_xM.
\]
Let  $x\in M$ and $F_x:=F|_{T_xM}$.  To measure the non-Euclidean feature of $F_x$, define ${\bf C}_y:T_xM\otimes T_xM\otimes T_xM\rightarrow \mathbb{R}$ by
\[
{\bf C}_{y}(u,v,w):={1 \over 2} \frac{d}{dt}\left[{\bf g}_{y+tw}(u,v)
\right]|_{t=0}, \ \ u,v,w\in T_xM.
\]
The family ${\bf C}:=\{{\bf C}_y\}_{y\in TM_0}$  is called the Cartan torsion. It is well known that ${\bf{C}}=0$ if and only if $F$ is Riemannian \cite{ShDiff}.

\bigskip

The horizontal covariant derivatives of ${\bf C}$ along geodesics give rise to  the  Landsberg curvature  ${\bf L}_y:T_xM\otimes T_xM\otimes T_xM\rightarrow \mathbb{R}$  defined by ${\bf L}_y(u,v,w):=L_{ijk}(y)u^iv^jw^k$, where
\[
L_{ijk}:=C_{ijk|s}y^s,
\]
$u=u^i{{\partial } \over {\partial x^i}}|_x$,  $v=v^i{{\partial }\over {\partial x^i}}|_x$ and $w=w^i{{\partial }\over {\partial x^i}}|_x$. The family ${\bf L}:=\{{\bf L}_y\}_{y\in TM_{0}}$  is called the Landsberg curvature. A Finsler metric is called a Landsberg metric  if {\bf{L}=0} \cite{TP1}. The quotient ${\bf L}/{\bf C}$ is regarded as the relative rate of change of  ${\bf C}$ along Finslerian geodesics.  A Finsler metric  is said to be relatively isotropic Landsberg metric if
\[
{\bf L}=\eta \bf C,
\]
where $\eta=\eta(x,y)$ is a homogeneous scalar function of degree 1  on $TM$.

\bigskip

Define the stretch curvature ${\bf \Sigma}_y:T_xM\otimes T_xM \otimes T_xM  \otimes T_xM\rightarrow \mathbb{R}$ by ${\bf \Sigma}_y(u, v, w,z):={\Sigma}_{ijkl}(y)u^iv^jw^kz^l$, where
\[
{\Sigma}_{ijkl}:=2(L_{ijk|l}-L_{ijl|k}).
\]
A Finsler metric  is said to be  stretch metric if ${\bf \Sigma}=0$. Every Landsberg metric is a  stretch metric.

\bigskip

Given a Finsler manifold $(M,F)$, then a global vector field ${\bf G}$ is induced by $F$ on $TM_0$, which in a standard coordinate $(x^i,y^i)$ for $TM_0$ is given by ${\bf G}=y^i {{\partial} \over {\partial x^i}}-2G^i(x,y){{\partial} \over {\partial y^i}}$, where
\[
G^i:=\frac{1}{4}g^{il}\{[F^2]_{x^ky^l}y^k-[F^2]_{x^l}\},\ \ y\in T_xM.
\]
The ${\bf G}$ is called the  spray associated  to $(M,F)$.  In local coordinates, a curve $c(t)$ is a geodesic if and only if its coordinates $(c^i(t))$ satisfy $ \ddot c^i+2G^i(\dot c)=0$.
\bigskip

For a tangent vector $y \in T_xM_0$, define ${\bf B}_y:T_xM\otimes T_xM \otimes T_xM\rightarrow T_xM$ and ${\bf E}_y:T_xM \otimes T_xM\rightarrow \mathbb{R}$ by ${\bf B}_y(u, v, w):=B^i_{\ jkl}(y)u^jv^kw^l{{\partial } \over {\partial x^i}}|_x$ and ${\bf E}_y(u,v):=E_{jk}(y)u^jv^k$
where
\[
B^i_{\ jkl}:={{\partial^3 G^i} \over {\partial y^j \partial y^k \partial y^l}},\ \ \ E_{jk}:={{1}\over{2}}B^m_{\ jkm}.
\]
The $\bf B$ and $\bf E$ are called the Berwald curvature and mean Berwald curvature, respectively.  Then $F$ is called a Berwald metric and weakly Berwald metric if $\bf{B}=0$ and $\bf{E}=0$, respectively  \cite{ShDiff}.

\bigskip

Define ${\bf D}_y:T_xM\otimes T_xM \otimes T_xM\rightarrow T_xM$  by
${\bf D}_y(u,v,w):=D^i_{\ jkl}(y)u^iv^jw^k\frac{\partial}{\partial x^i}|_{x}$ where
\[
D^i_{\ jkl}:=B^i_{\ jkl}-{2\over
n+1}\{E_{jk}\delta^i_l+E_{jl}\delta^i_k+E_{kl}\delta^i_j+E_{jk,l}
y^i\}.
\]
We call ${\bf D}:=\{{\bf D}_y\}_{y\in TM_{0}}$ the Douglas curvature. A Finsler metric with ${\bf D}=0$ is called a Douglas metric. The notion of Douglas metrics was proposed by B$\acute{a}$cs$\acute{o}$-Matsumoto as a generalization  of Berwald metrics \cite{BM}.

A Finsler metric is called a generalized Douglas-Weyl (GDW) metric if the Douglas tensor satisfy in
\[
h^i_{\alpha} D^\alpha_{\ jkl|m}y^m=0
\]
In \cite{BP}, B\'{a}cs\'{o}-Papp show that this class of Finsler metrics is closed under projective transformation. In \cite{NST1}, Najafi-Shen-Tayebi find the necessary and sufficient condition for a Randers metric to be a generalized Douglas-Weyl  metric.

\bigskip
The Riemann curvature ${\bf R}_y= R^i_{\ k}  dx^k \otimes \pxi|_x :
T_xM \to T_xM$ is a family of linear maps on tangent spaces, defined
by
\[
R^i_{\ k} = 2 {\pa G^i\over \pa x^k}-y^j{\pa^2 G^i\over \pa
x^j\pa y^k} +2G^j {\pa^2 G^i \over \pa y^j \pa y^k} - {\pa G^i \over
\pa y^j} {\pa G^j \over \pa y^k}.  \label{Riemann}
\]
The flag curvature in Finsler geometry is a natural extension of the sectional curvature in Riemannian geometry was  first introduced by L. Berwald \cite{Be}. For a flag $P={\rm span}\{y, u\} \subset T_xM$ with flagpole $y$, the  flag curvature ${\bf K}={\bf K}(P, y)$ is defined by
\begin{equation}\label{TP5}
{\bf K}(P, y):= {\g_y (u, {\bf R}_y(u)) \over \g_y(y, y) \g_y(u,u)
-\g_y(y, u)^2 }.
\end{equation}
When $F$ is Riemannian, ${\bf K}={\bf  K}(P)$ is independent of $y\in P$,  and is
the sectional curvature of $P$. We say that a Finsler metric $F$ is   of scalar curvature if for any $y\in T_xM$, the flag curvature ${\bf K}= {\bf K}(x, y)$ is a scalar function on the slit tangent bundle $TM_0$. If ${\bf K}=constant$, then $F$ is said to be of  constant flag curvature. A Finsler metric $F$ is called  isotropic flag curvature, if ${\bf K}= {\bf K}(x)$.

A Finsler metric $F$ is said to be R-quadratic if $R_y$ is quadratic in $y\in T_xM$ at each point $x\in M$. Let
\[
R^i_{\ jkl}(x,y):=\frac{1}{3}\frac{\partial }{\partial
y^j}\{\frac{\partial R^i_{\ k}}{\partial y^l}-\frac{\partial
R^i_{\ l}}{\partial y^k}\},
\]
where $R^i_{\ jkl}$ is the Riemann curvature of Berwald connection. Then we have $R^i_{\ k}=R^i_{\ jkl}(x,y)y^jy^l$. Therefore  $R^i_{\ k}$ is quadratic in $y\in T_xM$ if and only if $R^i_{\ jkl}$ are functions of position alone.  Indeed a Finsler metric is R-quadratic if and only if the h-curvature of Berwald connection depends on position only in the sense of B$\acute{a}$cs$\acute{o}$-Matsumoto \cite{BM3}.

\section{Proof of Theorem \ref{THM1}}
In this section, we are going to prove the  Theorem \ref{THM1}.  We need the following.
\begin{lem}\label{Lem2}{\rm (\cite{NST1})}
\emph{Let $(M,F)$ be a Finsler metric. Then $F$ is a GDW-metric if and only if
\begin{equation}\label{TP8}
D^i_{\ jkl|s}y^s=T_{jkl}y^i,
\end{equation}
for some tensor $T_{jkl}$ on manifold $M$.}
\end{lem}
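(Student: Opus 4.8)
The plan is to read the defining relation of a GDW-metric, $h^i_{\alpha}D^{\alpha}_{\ jkl|m}y^m=0$, as the vanishing of the transverse part (with respect to the flagpole $y$) of the vector-valued object $D^i_{\ jkl|m}y^m$, and then to recognize that this is precisely the assertion that $D^i_{\ jkl|m}y^m$ is proportional to $y^i$. The whole statement thus rests on two elementary properties of the angular form $h^i_{j}=\delta^i_j-\ell^i\ell_j$, with $\ell^i=y^i/F$ and $\ell_j=g_{jk}y^k/F$: the orthogonality identity $h^i_{\alpha}y^{\alpha}=0$, which follows from $\ell_{\alpha}y^{\alpha}=F$ together with $\ell^iF=y^i$, and the splitting $\delta^i_{\alpha}=h^i_{\alpha}+\ell^i\ell_{\alpha}$ of the identity endomorphism into its transverse and longitudinal parts.

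For the forward implication I would assume $h^i_{\alpha}D^{\alpha}_{\ jkl|m}y^m=0$ and insert the splitting into $D^i_{\ jkl|m}y^m=\delta^i_{\alpha}D^{\alpha}_{\ jkl|m}y^m$, giving
\[
D^i_{\ jkl|m}y^m=h^i_{\alpha}D^{\alpha}_{\ jkl|m}y^m+\ell^i\,\ell_{\alpha}D^{\alpha}_{\ jkl|m}y^m=\ell^i\big(\ell_{\alpha}D^{\alpha}_{\ jkl|m}y^m\big).
\]
Since $\ell^i=y^i/F$, setting $T_{jkl}:=F^{-1}\ell_{\alpha}D^{\alpha}_{\ jkl|m}y^m$ yields exactly $D^i_{\ jkl|s}y^s=T_{jkl}y^i$. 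As a contraction of the tensor $D^{\alpha}_{\ jkl|m}$ against $\ell_{\alpha}$ and $y^m$, rescaled by $F^{-1}$, the coefficient $T_{jkl}$ is a well-defined tensor; a homogeneity count (with $D^i_{\ jkl}$ of degree $-1$ in $y$, so that the directional horizontal derivative $D^i_{\ jkl|s}y^s$ is of degree $0$) shows that $T_{jkl}$ is homogeneous of degree $-1$ in $y$.

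For the converse I would start from $D^{\alpha}_{\ jkl|s}y^s=T_{jkl}y^{\alpha}$ and contract the free index with $h^i_{\alpha}$, obtaining $h^i_{\alpha}D^{\alpha}_{\ jkl|s}y^s=T_{jkl}\,h^i_{\alpha}y^{\alpha}=0$ by the orthogonality identity, which is the GDW condition. The argument is purely algebraic and I do not expect a genuine obstacle: the only points demanding care are checking the two identities for $h^i_{j}$ and keeping the index on which $h$ acts consistent with the definition in the text. No differential or integrability argument is needed, since the equivalence holds pointwise in $(x,y)$.
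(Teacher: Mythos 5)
Your proof is correct. Note that the paper itself offers no argument for this lemma: it is quoted with the citation \cite{NST1}, so there is no in-paper proof to compare against. Your argument is the natural one underlying the cited result: the decomposition $\delta^i_{\alpha}=h^i_{\alpha}+\ell^i\ell_{\alpha}$ together with $h^i_{\alpha}y^{\alpha}=0$ shows immediately that vanishing of the $h$-projection of $D^{\alpha}_{\ jkl|m}y^m$ is equivalent to that quantity being proportional to $y^i$, with $T_{jkl}=F^{-1}\ell_{\alpha}D^{\alpha}_{\ jkl|m}y^m$ in the forward direction. One small remark: the lemma's phrase ``tensor $T_{jkl}$ on manifold $M$'' should be read as a Finsler tensor field (depending on both $x$ and $y$), not a tensor depending on position alone; your $T_{jkl}$ is $y$-dependent, but so is the $T_{jkl}$ produced in the paper's own proof of Theorem 1.1 (equation (\ref{TP21})), so your reading agrees with how the lemma is actually used.
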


\bigskip

\begin{prop}\label{Prop1}
Let $F$ be a non-Riemannian generalized isotropic Berwald metric. Then $F$ is a Douglas metric if and only if it is a relatively isotropic Landsberg metric ${\bf L}+F^2\lambda {\bf C}=0$.
\end{prop}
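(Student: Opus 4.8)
The plan is to show that both conditions in the statement are equivalent to one and the same pointwise relation between the structure functions of $F$, namely $\mu=2F\lambda$. The computations rest on the contractions $y_i\ell^i=F$, $\ y_i h^i_j=0$, $\ C_{jkl}y^l=0$, $\ h^m_j h_{km}=h_{jk}$ and $h^m_m=n-1$, together with the standard identity $L_{jkl}=-\tfrac12\,y_i B^i_{\ jkl}$ linking the Landsberg and Berwald curvatures.

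First I would contract the defining relation (\ref{B0}) with $y_i$. Since $y_i h^i_j=0$ and $y_i\ell^i=F$, only the Cartan term survives and $y_i B^i_{\ jkl}=F\mu\,C_{jkl}$, so that $L_{jkl}=-\tfrac12 F\mu\,C_{jkl}$. Hence
\be
L_{jkl}+F^2\lambda\,C_{jkl}=F\Big(F\lambda-\tfrac{\mu}{2}\Big)C_{jkl},
\ee
and, as $F$ is non-Riemannian so that ${\bf C}\neq0$, this tensor vanishes exactly when $\mu=2F\lambda$.

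Next I would compute the Douglas curvature. Contracting $i$ with $l$ in (\ref{B0}) and using $C_{jkm}\ell^m=0$, $h^m_j h_{km}=h_{jk}$, $h^m_m=n-1$ gives $E_{jk}=\tfrac12 B^m_{\ jkm}=\tfrac{n+1}{2}\lambda\,h_{jk}$. Writing $h^i_j=\delta^i_j-\ell^i\ell_j$ in (\ref{B0}) and substituting both expressions into $D^i_{\ jkl}$, the pieces $\lambda(\delta^i_j h_{kl}+\delta^i_k h_{jl}+\delta^i_l h_{jk})$ cancel exactly against $\tfrac{2}{n+1}(E_{jk}\delta^i_l+E_{jl}\delta^i_k+E_{kl}\delta^i_j)$, leaving only terms carrying the factor $y^i$. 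For the remaining $\tfrac{2}{n+1}E_{jk,l}y^i$ I would differentiate $E_{jk}$ using $\partial\ell_j/\partial y^l=F^{-1}h_{jl}$ and $\partial g_{jk}/\partial y^l=2C_{jkl}$, which yields $h_{jk,l}=2C_{jkl}-F^{-1}(\ell_k h_{jl}+\ell_j h_{kl})$. After collecting terms this gives
\be
D^i_{\ jkl}=\Big[\Big(\tfrac{\mu}{F}-2\lambda\Big)C_{jkl}-\Psi_l\,h_{jk}\Big]y^i,\qquad \Psi_l:=\lambda_{,l}+\tfrac{\lambda}{F}\ell_l.
\ee

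The only delicate point is the residual term $\Psi_l h_{jk}$. Because $E_{jk,l}$ is the third $y$-derivative of the scalar $\partial G^m/\partial y^m$, it is totally symmetric in $j,k,l$; imposing symmetry in $(k,l)$ on the assembled expression forces $h_{jk}\Psi_l=h_{jl}\Psi_k$, and contracting with $g^{jk}$ together with Euler's relation $\Psi_l y^l=-\lambda+\lambda=0$ gives $(n-2)\Psi_l=0$, so $\Psi_l=0$ whenever $n>2$. With $\Psi_l=0$ the previous display becomes the clean identity
\be
D^i_{\ jkl}=-\frac{2}{F^2}\big(L_{jkl}+F^2\lambda\,C_{jkl}\big)y^i,
\ee
and since $y^i\neq0$ it follows immediately that ${\bf D}=0$ if and only if ${\bf L}+F^2\lambda{\bf C}=0$. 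I expect the Douglas-tensor computation, and in particular isolating the $E_{jk,l}y^i$ contribution and checking the cancellations, to be the main obstacle, with the Landsberg identity and the symmetry argument removing $\Psi_l$ being comparatively quick.
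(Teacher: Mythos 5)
Your proof is correct and follows essentially the same route as the paper's: contract (\ref{B0}) with $y_i$ to get $L_{jkl}=-\tfrac{1}{2}\mu F C_{jkl}$, compute $E_{jk}=\tfrac{n+1}{2}\lambda h_{jk}$, substitute into the Douglas tensor to reach $D^i_{\ jkl}=-2\{F^{-2}L_{jkl}+\lambda C_{jkl}\}y^i-\Psi_l h_{jk}y^i$ with $\Psi_l=\lambda_{,l}+\lambda F^{-2}y_l$, and then eliminate the $\Psi_l$-term using the symmetry of $D$ in its lower indices. The only point where you diverge is in making explicit the restriction $n>2$ needed for the contraction argument forcing $\Psi_l=0$; this is a restriction the paper's own proof (which simply asserts $\Psi_l=0$ from $D^i_{\ jkl}=D^i_{\ jlk}$) also tacitly requires, since in dimension two $h_{jk}=m_jm_k$ has rank one and any $\Psi_l$ proportional to $m_l$ satisfies the symmetry without vanishing, so your version is if anything the more careful one.
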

\begin{proof}
By assumption, we have
\be\label{B1}
B^i_{\ jkl}=\mu C_{jkl}\ell^i+\lambda(h^i_j h_{kl}+h^i_k h_{jl}+h^i_l h_{jk}),
\ee
Taking a trace of (\ref{B1}) yields
\be\label{B2}
E_{jk}=\frac{n+1}{2}\lambda h_{jk}.
\ee
Thus
\be\label{B3}
B^i_{\ jkl}=\mu C_{jkl}\ell^i+\frac{2}{n+1}(E_{jk}h^i_l+E_{kl}h^i_j+E_{jl}h^i_k ).
\ee
Contracting (\ref{B3}) with $y_i$ implies that
\be\label{B3b}
\mu  C_{jkl}=-2F^{-1}L_{jkl}.
\ee
By taking (\ref{B3b}) in (\ref{B3}) it follows that
\be\label{B3bb}
B^i_{\ jkl}=-2F^{-1}L_{jkl}\ell^i+\frac{2}{n+1}(E_{jk}h^i_l+E_{kl}h^i_j+E_{jl}h^i_k ).
\ee
On the other hand, we have
\begin{equation}\label{DB2}
h_{ij,k}=2C_{ijk}-F^{-2}(y_jh_{ik}+y_ih_{jk}),
\end{equation}
which implies that
\begin{equation}\label{DB3}
2E_{jk,l}=(n+1)\lambda_{,l} h_{jk}+(n+1)\lambda\Big\{2C_{jkl}-F^{-2}(y_kh_{jl}+y_jh_{kl})\Big\}.
\end{equation}
The Douglas tensor is given by
\begin{equation}\label{DB4}
D^i_{\ jkl}=B^i_{\ jkl}-\frac{2}{n+1}\{E_{jk}\delta^i_{\ l}+E_{kl}\delta^i_{\ j}+E_{lj}\delta^i_{\ k}+E_{jk,l}y^i\}.
\end{equation}
Putting (\ref{B2}), (\ref{B3bb}) and (\ref{DB3}) in (\ref{DB4}) yields
\begin{equation}\label{TP15}
D^i_{\ jkl}=-2\{F^{-2}L_{jkl}+\lambda C_{jkl}\}y^i-(\lambda y_lF^{-2}+\lambda_{,l})h_{jk} y^i.
\end{equation}
For the Douglas curvature, we have $D^i_{\ jkl}=D^i_{\ jlk}$. Then by (\ref{TP15}), we have
\begin{equation}\label{TP16}
\lambda y_lF^{-2}+\lambda_{,l}=0.
\end{equation}
From (\ref{TP15}) and (\ref{TP16}) we deduce that
\begin{equation}\label{TP17}
D^i_{\ jkl}=-2\{F^{-2}L_{jkl}+\lambda C_{jkl}\}y^i.
\end{equation}
By (\ref{TP17}), it follows that $F$ is a Douglas metric if and only if $F^{-2}L_{jkl}+\lambda C_{jkl}=0$. This completes the proof.
\end{proof}
\bigskip

\begin{cor}
Let $(M, F)$ be a non-Riemannian Finsler surface. Then $F$ is a Douglas metric if and only if $3I_{,1}+FII_2=0$.
\end{cor}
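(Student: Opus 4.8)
The plan is to specialize Proposition~\ref{Prop1} to dimension two. As recorded in the introduction, every Finsler surface has generalized isotropic Berwald curvature with $\mu=-2I_{,1}/I$ and $\lambda=I_2/3$, and a surface fails to be Riemannian exactly when its main scalar $I$ is nonzero; hence a non-Riemannian surface is automatically a non-Riemannian generalized isotropic Berwald metric. Proposition~\ref{Prop1} then tells us that $F$ is a Douglas metric if and only if the relatively isotropic Landsberg identity
\[
F^{-2}L_{jkl}+\lambda C_{jkl}=0
\]
holds, so the whole task reduces to rewriting this identity in the Berwald frame in terms of $I$ and its derivatives.

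The first step is to express the two torsions in the frame $(\ell^i,m^i)$. The Cartan tensor is $C_{jkl}=F^{-1}I\,m_jm_km_l$, as recalled just before (\ref{3}). Rather than differentiate $C$ directly to find the Landsberg tensor, I would read it off from equation (\ref{B3b}) inside the proof of Proposition~\ref{Prop1}, namely $\mu C_{jkl}=-2F^{-1}L_{jkl}$, which gives $L_{jkl}=-\tfrac{1}{2}F\mu\,C_{jkl}$. Substituting $\mu=-2I_{,1}/I$ and $C_{jkl}=F^{-1}I\,m_jm_km_l$, the factors of $F$ and $I$ cancel and leave the clean expression $L_{jkl}=I_{,1}\,m_jm_km_l$.

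Substituting both torsions into $F^{-2}L_{jkl}+\lambda C_{jkl}=0$ and factoring out the common tensor $m_jm_km_l$ gives $\big(F^{-2}I_{,1}+\tfrac{1}{3}F^{-1}II_2\big)m_jm_km_l=0$. Since $F$ is non-Riemannian we have $I\neq0$, so $m_i\neq0$ and the symmetric tensor $m_jm_km_l$ is nonzero---contracting with $m^jm^km^l$ and using $m^im_i=1$ isolates the scalar coefficient. Hence the identity is equivalent to $F^{-2}I_{,1}+\tfrac{1}{3}F^{-1}II_2=0$, and multiplying through by $3F^2$ yields $3I_{,1}+FII_2=0$, as claimed.

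There is no real analytic difficulty here; the content lies entirely in matching conventions, and the one point that repays care is obtaining $L_{jkl}=I_{,1}m_jm_km_l$ from (\ref{B3b}) instead of by a direct covariant differentiation of $C$, since this shortcut both avoids a long computation and makes transparent how the factor in $\lambda=I_2/3$ produces the $F$ that multiplies $II_2$ in the final relation.
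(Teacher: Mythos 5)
Your proposal is correct and follows essentially the same route as the paper: both reduce the claim to Proposition~\ref{Prop1} (equivalently, to the Douglas tensor formula obtained from (\ref{B3b}) and (\ref{TP17})) and then substitute the two-dimensional data $\mu=-2I_{,1}/I$, $\lambda=I_2/3$. The only cosmetic difference is that you work the condition $F^{-2}L_{jkl}+\lambda C_{jkl}=0$ out explicitly in the Berwald frame via $L_{jkl}=I_{,1}m_jm_km_l$, whereas the paper keeps $\mu$ and $\lambda$ abstract, deriving the equivalent condition $\mu=2F\lambda$ before substituting.
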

\begin{proof}
As we explain in introduction,  every two-dimensional Finsler manifolds are generalized isotropic Berwald  manifolds. By (\ref{B3b}) and  (\ref{TP17})  we get
\begin{equation}\label{2D}
D^i_{\ jkl}=\{F^{-1}\mu-2\lambda\} C_{jkl}y^i.
\end{equation}
Thus $F$ is a Douglas metric if and only if $\mu=2F\lambda$. Since $\mu=\frac{-2I_{,1}}{I}$ and  $\lambda=\frac{I_2}{3}$, then we get the proof.
\end{proof}

\bigskip

By (\ref{B2}), we have the following.

\begin{cor}
Let $(M, F)$ be a  Finsler surface. Then $F$ is a weakly Berwald metric if and only if $I_2=0$.
\end{cor}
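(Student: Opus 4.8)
The plan is to read the mean Berwald curvature directly off equation (\ref{B2}) after specializing to the surface case. First I would set $n=2$ in (\ref{B2}), which is legitimate because, as recalled in the introduction, every Finsler surface is a generalized isotropic Berwald manifold with $\mu=-2I_{,1}/I$ and $\lambda=I_2/3$. Substituting $n=2$ together with $\lambda=I_2/3$ into (\ref{B2}) gives
\[
E_{jk}=\frac{3}{2}\lambda h_{jk}=\frac{I_2}{2}\,h_{jk}.
\]

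The forward direction is then immediate: if $I_2=0$ the right-hand side vanishes, so $E_{jk}=0$ and $F$ is weakly Berwald. For the converse I would take the trace rather than attempt to cancel $h_{jk}$ pointwise. Contracting the displayed identity with $g^{jk}$ and using the standard fact that the angular metric has trace $g^{jk}h_{jk}=n-1$, which equals $1$ on a surface, yields $g^{jk}E_{jk}=I_2/2$. Hence $E_{jk}=0$ forces its trace to vanish, and therefore $I_2=0$.

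The only point requiring any care is this converse step: on a surface the angular metric $h_{jk}=m_jm_k$ is a rank-one (hence degenerate) tensor, so one cannot simply divide it out of the relation $E_{jk}=\tfrac{I_2}{2}h_{jk}$. Passing to the trace circumvents the difficulty cleanly, precisely because the angular metric has nonzero trace $n-1$. Beyond this I expect no obstacle, since the entire assertion amounts to the trace identity (\ref{B2}) evaluated in dimension two.
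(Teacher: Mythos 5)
Your proof is correct and follows the same route as the paper, which obtains this corollary directly from equation (\ref{B2}) specialized to a surface, where $\lambda=I_2/3$ gives $E_{jk}=\tfrac{I_2}{2}h_{jk}$. Your additional care in the converse direction (taking the trace with $g^{jk}$ rather than cancelling the rank-one tensor $h_{jk}$) is a sound way to make explicit what the paper leaves implicit.
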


\bigskip

\noindent {\bf Proof of Theorem \ref{THM1}}:  The Douglas tensor of $F$ is given by
\begin{equation}\label{TP19}
D^i_{\ jkl}=-2\{F^{-2}L_{jkl}+\lambda C_{jkl}\}y^i.
\end{equation}
Taking a horizontal derivation of (\ref{TP19}) implies that
\begin{equation}\label{TP20}
D^i_{\ jkl|s}y^s=-2\{F^{-2}L_{jkl|s}y^s+\lambda' C_{jkl}+\lambda L_{jkl}\}y^i.
\end{equation}
where $\lambda'=\lambda_{|m} y^m$. By Lemma \ref{Lem2}, $F$ is a GDW-metric with
\begin{equation}\label{TP21}
T_{jkl}=-2\{F^{-2}L_{jkl|s}y^s+\lambda' C_{jkl}+\lambda L_{jkl}\}.
\end{equation}
This completes the proof.
\qed
\section{Proof of Theorem \ref{THM2}}
In this section, we study complete generalized isotropic Berwald  manifold with vanishing stretch curvature.

\bigskip
\noindent
{\bf Proof of Theorem \ref{THM2}}: By  definition
\be
{\Sigma}_{ijkl}:=2(L_{ijk|l}-L_{ijl|k})=0.\label{St1}
\ee
Contracting (\ref{B0}) with $y_i$ and using
\[
y_iB^i_{\ jkl}=-2L_{jkl}
\]
implies that
\be
L_{ijk}=-\frac{1}{2}\mu FC_{ijk}.\label{St2}
\ee
By (\ref{St1}) and (\ref{St2}), we get
\be
\mu_{|k}C_{ijl}-\mu_{|l}C_{ijk}=\mu(C_{ijk|l}-C_{ijl|k}).\label{St3}
\ee
Contracting (\ref{St3}) with $y^k$ and using (\ref{St2}) yields
\be
(\mu'-\frac{1}{2}\mu^2 F)C_{ijk}=0.\label{St4}
\ee
If $C_{ijk}=0$, then $F$ is a Riemannian metric, and thus it is a Landsberg metric. Suppose that $F$ is a non-Riemannian metric. Then we have
\be
2\mu'=\mu^2 F.\label{St5}
\ee
On a Finslerian geodesics, we have
\be
\mu'=\mu'(t)=\frac{d \mu}{dt}.
\ee
Thus
\be
2\frac{d \mu}{dt}=\mu^2,
\ee
which its general solution is
\be
\mu(t)=\frac{2\mu(0)}{2-t\mu(0)}.
\ee
If $\mu(0)=0$, then $\mu(t)=0$ and by (\ref{St2}), we conclude that $F$ is a Landsberg metric. Suppose that $\mu(0)\neq 0$. Using $||\mu||<\infty$, and letting $t\rightarrow +\infty$ or $t\rightarrow -\infty$, implies  that $\mu=0$. This complete the proof.
\qed

\section{Proof of Theorem \ref{THM3}}
To prove Theorem \ref{THM3}, we need the following.
\begin{lem}\label{Lem1}{\rm (\cite{Ich}\cite{NST2})}
\emph{For the Berwald connection, the following Bianchi identities hold:}
\begin{eqnarray}
&&R^i_{\ jkl|m}+ R^i_{\ jlm|k}+R^i_{\ jmk|l}=B^i_{\ jku}R^u_{\ lm}+B^i_{\ jlu}R^u_{ mk}+B^i_{\ jmu}R^u_{\ kl}\label{TP22}\\
&&B^i_{\ jml|k}- B^i_{\ jmk|l}=R^i_{\ jkl,m}\label{TP23}\\
&&B^i_{\ jkl,m}=B^i_{\ jkm,l}\label{TP24}
\end{eqnarray}
where $R^i_{\ kl}:=\ell^j R^i_{\ jkl}$.
\end{lem}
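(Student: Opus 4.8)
The three statements are precisely the first and second Bianchi identities of the Berwald connection, so my plan is to derive all of them from a single source: the structure of the Berwald connection together with the commutation relations of the adapted local frame on $TM_0$. I would work in the horizontal/vertical frame $\{\delta_i,\dot\partial_i\}$, where $\delta_i=\partial/\partial x^i-G^j_{\ i}\,\partial/\partial y^j$ and $\dot\partial_i=\partial/\partial y^i$, with $G^j_{\ i}:=\partial G^j/\partial y^i$. The Berwald connection is characterized by the horizontal coefficients $G^i_{\ jk}=\partial^2 G^i/\partial y^j\partial y^k$ and by \emph{vanishing} vertical coefficients; consequently $B^i_{\ jkl}=\dot\partial_l G^i_{\ jk}$ is its $hv$-curvature and $R^i_{\ jkl}$ its $hh$-curvature. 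The three brackets I will use repeatedly are $[\dot\partial_k,\dot\partial_l]=0$, $[\delta_k,\dot\partial_l]=G^p_{\ kl}\dot\partial_p$, and $[\delta_k,\delta_l]=R^p_{\ kl}\dot\partial_p$, the last one exhibiting the curvature of the nonlinear connection, which is exactly the contraction $R^p_{\ kl}$ that appears on the right-hand side of (\ref{TP22}).

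The easiest is (\ref{TP24}). Because the vertical connection coefficients of the Berwald connection vanish, the $v$-covariant derivative of any tensor reduces to the plain vertical partial derivative $\dot\partial$ acting on its components. Since $B^i_{\ jkl}=\dot\partial_j\dot\partial_k\dot\partial_l G^i$, we get $B^i_{\ jkl,m}=\dot\partial_m\dot\partial_j\dot\partial_k\dot\partial_l G^i$, which is totally symmetric in its four lower indices by commutativity of ordinary partial derivatives; symmetry under $l\leftrightarrow m$ is precisely (\ref{TP24}). For (\ref{TP23}), the mixed $hhv$-identity, I would start from the explicit expression $R^i_{\ jkl}=\delta_k G^i_{\ jl}-\delta_l G^i_{\ jk}+G^m_{\ jl}G^i_{\ mk}-G^m_{\ jk}G^i_{\ ml}$ (sign convention fixed once and carried throughout), apply $\dot\partial_m$, and use $\dot\partial_m G^i_{\ jk}=B^i_{\ jkm}$ together with $[\dot\partial_m,\delta_k]=-G^p_{\ mk}\dot\partial_p$ to commute the vertical derivative past the $\delta$'s. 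Expanding the two $h$-covariant derivatives $B^i_{\ jml|k}$ and $B^i_{\ jmk|l}$ and subtracting, the quadratic connection terms together with the bracket corrections should cancel, leaving $R^i_{\ jkl,m}$.

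The cyclic identity (\ref{TP22}) is the first Bianchi identity for the $hh$-curvature. I would apply the Jacobi identity to the horizontal frame, $\sum_{\mathrm{cyc}(k,l,m)}[[\delta_k,\delta_l],\delta_m]=0$, or equivalently compute the cyclic sum $\sum_{\mathrm{cyc}}R^i_{\ jkl|m}$ directly. The nonintegrability of the horizontal distribution, encoded in $[\delta_k,\delta_l]=R^p_{\ kl}\dot\partial_p$, forces a vertical derivative $\dot\partial_p R^i_{\ jkl}$ to enter the cyclic sum; via $\dot\partial_p G^i_{\ jk}=B^i_{\ jkp}$ (equivalently, via the previous step) this is converted into the $hv$-curvature, producing exactly the three terms $B^i_{\ jku}R^u_{\ lm}+B^i_{\ jlu}R^u_{\ mk}+B^i_{\ jmu}R^u_{\ kl}$. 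Conceptually all three statements are the decomposition of the single identity $D\Omega=0$ (the second Bianchi identity for the curvature $2$-form of the Berwald connection) into its $hhh$-, $hhv$- and $hvv$-components relative to the adapted coframe $(dx^i,\delta y^i)$, the extra terms arising solely because this coframe is not closed, with $d(\delta y^i)$ reproducing $R^i_{\ kl}$ and $G^i_{\ kl}$.

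The main obstacle is not conceptual but organizational: correctly tracking every connection-coefficient correction term in the $h$-covariant derivatives, using the frame brackets consistently, and fixing all signs so that the many terms collapse to exactly the stated right-hand sides. In particular I would fix the sign convention for $R^i_{\ jkl}$ at the outset. One genuine subtlety deserves attention: a degree count shows that the left-hand side of (\ref{TP22}) is $0$-homogeneous while $B^i_{\ jku}$ is $(-1)$-homogeneous, so the contraction $R^u_{\ lm}$ must be the $1$-homogeneous curvature of the nonlinear connection that emerges from $[\delta_k,\delta_l]$; I would verify this normalization carefully against the stated definition $R^u_{\ lm}=\ell^j R^u_{\ jlm}$ before collecting terms, to be sure no spurious factor of $F$ survives.
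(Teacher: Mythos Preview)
Your outline is the standard, correct route to these Bianchi identities for the Berwald connection; the paper, however, does not prove Lemma~\ref{Lem1} at all but simply quotes it from the references \cite{Ich}, \cite{NST2}. So there is nothing in the paper to compare against beyond the bare statement.

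Your homogeneity check at the end is well taken and in fact points to a genuine normalization issue in the paper's statement: with $R^i_{\ jkl}$ positively $0$-homogeneous and $B^i_{\ jku}$ positively $(-1)$-homogeneous, the right-hand side of \eqref{TP22} balances only if $R^u_{\ lm}$ is $1$-homogeneous. The bracket $[\delta_l,\delta_m]=R^p_{\ lm}\dot\partial_p$ produces a $1$-homogeneous object, which equals $y^jR^p_{\ jlm}$, not $\ell^jR^p_{\ jlm}$ as the lemma writes. In carrying out your computation you should therefore use $R^u_{\ lm}=y^jR^u_{\ jlm}$; the paper's definition with $\ell^j=y^j/F$ would leave a stray factor of $F$. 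This does not affect anything downstream in the paper, since only \eqref{TP23} is actually used.
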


Taking a trace of (\ref{TP23}) implies the following.
\begin{lem}{\rm (\cite{Mo1}\cite{TBN})}\label{lem2}
Let $F$ be a R-quadratic Finsler metric. Then\ \ ${\bf H}=0$.
\end{lem}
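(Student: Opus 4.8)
The plan is to obtain ${\bf H}=0$ by taking the trace of the Bianchi identity (\ref{TP23}), exactly as the sentence preceding the lemma indicates. Recall that the $H$-curvature (the $\bar{\bf E}$-curvature) is the horizontal derivative of the mean Berwald curvature along geodesics, i.e. $H_{jk}=E_{jk|m}y^m$, so establishing ${\bf H}=0$ amounts to proving $E_{jk|m}y^m=0$.

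First I would contract the upper index $i$ with the lower index $m$ in (\ref{TP23}). Using the full symmetry of $B^i_{\ jkl}$ in its three lower indices together with the definition $E_{jk}=\frac12 B^m_{\ jkm}$ (so that $B^i_{\ jil}=2E_{jl}$ and $B^i_{\ jik}=2E_{jk}$), the two terms on the left collapse to $2E_{jl|k}$ and $2E_{jk|l}$, while the right-hand side becomes the vertical trace $R^i_{\ jkl,i}$. This gives
\[
2\big(E_{jl|k}-E_{jk|l}\big)=R^i_{\ jkl,i}.
\]

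Next I would invoke the hypothesis. Since $F$ is R-quadratic, the $h$-curvature $R^i_{\ jkl}$ of the Berwald connection is a function of position alone, so every vertical derivative $R^i_{\ jkl,m}$ vanishes; in particular the trace $R^i_{\ jkl,i}=0$. Hence $E_{jl|k}=E_{jk|l}$, that is, the horizontal covariant derivative of $E$ is symmetric in its last two indices. Finally I would contract this relation with $y^k$: the left side yields $E_{jl|k}y^k=H_{jl}$, while on the right I would use the homogeneity identity $E_{jk}y^k=0$ (a consequence of $B^i_{\ jkl}y^l=0$) together with $y^k{}_{|l}=0$ for the Berwald connection, so that $E_{jk|l}y^k=(E_{jk}y^k)_{|l}=0$. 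Combining the two sides gives $H_{jl}=0$, which is the assertion.

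The argument is essentially bookkeeping, and I expect the only step needing genuine care to be the trace in the first paragraph, where the factors of two and the index symmetries of $B$ must be tracked correctly; the remainder rests on the two standard facts $E_{jk}y^k=0$ and $y^k{}_{|l}=0$, which make the right-hand side of the contracted relation collapse.
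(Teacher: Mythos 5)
Your proof is correct and follows exactly the route the paper indicates: the paper's own "proof" is just the remark that the lemma follows by taking a trace of the Bianchi identity (\ref{TP23}) (plus citations), and your contraction $i=m$, giving $2(E_{jl|k}-E_{jk|l})=R^i_{\ jkl,i}=0$ under the R-quadratic hypothesis, followed by contraction with $y^k$ using $E_{jk}y^k=0$ and $y^k_{\ |l}=0$, is precisely that computation carried out in full.
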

\bigskip

Contracting (\ref{TP23}) with $y_i$ yields
\begin{eqnarray}\label{eqq2}
   y_i R^i_{\,\,jkl,m}\!\!\!\!&=&\!\!\!\!\ y_iB^i_{\,\,jml|k}-y_iB^i_{\,\,jkm|l} \nonumber\\
    \!\!\!\!&=&\!\!\!\!\  (y_iB^i_{\,\,jml})_{|k}-(y_iB^i_{\,\,jkm})_{|l} \nonumber\\
  \!\!\!\!&=&\!\!\!\!\  -2L_{jml|k}+2L_{jkm|l}=\Sigma_{jkml}.
\end{eqnarray}
Thus we conclude the following.
\begin{cor}\label{cor1}
Every R-quadratic Finsler metric is a stretch metric.
\end{cor}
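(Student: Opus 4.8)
The plan is to obtain the conclusion as an immediate consequence of the identity (\ref{eqq2}), which already expresses a component of the stretch tensor in terms of the vertical derivative of the Riemann curvature of the Berwald connection, namely $y_i R^i_{\ jkl,m}=\Sigma_{jkml}$. So the whole argument reduces to showing that the left-hand side vanishes whenever $F$ is R-quadratic.

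First I would recall the characterization of R-quadratic metrics recorded in the Preliminaries: since $R^i_{\ k}=R^i_{\ jkl}(x,y)y^jy^l$, the curvature $R^i_{\ k}$ is quadratic in $y$ if and only if the tensor $R^i_{\ jkl}$ is a function of the position $x$ alone. Thus, for an R-quadratic metric, $R^i_{\ jkl}$ carries no dependence on the fibre coordinate $y$.

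The key step is then to pass from position-independence to the vanishing of the vertical derivative. Because we work throughout with the Berwald connection, whose vertical connection coefficients vanish, the $v$-covariant derivative ``$,$'' of any tensor agrees with the ordinary partial derivative $\partial/\partial y^m$. Hence an R-quadratic metric satisfies
\[
R^i_{\ jkl,m}=\frac{\partial R^i_{\ jkl}}{\partial y^m}=0.
\]
Contracting with $y_i$ and invoking (\ref{eqq2}) gives $\Sigma_{jkml}=y_iR^i_{\ jkl,m}=0$; since every component of the stretch tensor is captured this way, we obtain ${\bf \Sigma}=0$, i.e., $F$ is a stretch metric.

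I expect no genuine obstacle, as the substantive content has been front-loaded into (\ref{eqq2}) and into the equivalence between R-quadraticity and $R^i_{\ jkl}$ being position-only. The single point deserving explicit mention is the reduction of the $v$-covariant derivative to $\partial/\partial y^m$: this is exactly where the choice of the Berwald connection — as opposed to a connection with nontrivial vertical part — is used, and it should be stated rather than silently assumed.
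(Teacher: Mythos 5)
Your proof is correct and is essentially the paper's own argument: the paper also derives the corollary directly from the identity $y_iR^i_{\ jkl,m}=\Sigma_{jkml}$ in (\ref{eqq2}) together with the fact that R-quadraticity makes $R^i_{\ jkl}$ position-only, hence kills its vertical derivative. Your explicit remark that the $v$-covariant derivative of the Berwald connection reduces to $\partial/\partial y^m$ is a welcome clarification of a step the paper leaves implicit, but it is not a different route.
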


\bigskip

\noindent {\bf Proof of Theorem \ref{THM3}}: Let $(M, F)$ be a generalized isotropic Berwald manifold. Suppose that $F$ is  R-quadratic metric. By Corollary \ref{cor1}, it is sufficient to prove that ${\bf \bar E}=0$. By assumption, we have
\be\label{BR1}
B^i_{\ jkl}=-2F^{-1}L_{jkl}\ell^i+\frac{2}{n+1}(E_{jk}h^i_l+E_{kl}h^i_j+E_{jl}h^i_k ).
\ee
Then
\be\label{BR2}
B^i_{\ jkl|s}=-2F^{-1}L_{jkl|s}\ell^i+\frac{2}{n+1}(E_{jk|s}h^i_l+E_{kl|s}h^i_j+E_{jl|s}h^i_k ).
\ee
Replacing $l$ and $s$ in (\ref{BR2}) yields
\be\label{BR3}
B^i_{\ jks|l}=-2F^{-1}L_{jks|l}\ell^i+\frac{2}{n+1}(E_{jk|l}h^i_s+E_{ks|l}h^i_j+E_{js|l}h^i_k ).
\ee
(\ref{BR2})-(\ref{BR3}), implies that
\begin{eqnarray}\label{BR4}
\nonumber B^i_{\ jkl|s}-B^i_{\ jks|l}=\!\!\!\!&-&\!\!\!\!\ 2F^{-1}\{L_{jkl|s}-L_{jks|l}\}\ell^i+\frac{2}{n+1}(E_{jk|s}h^i_l-E_{jk|l}h^i_s)\\
\!\!\!\!&+&\!\!\!\!\ \frac{2}{n+1}\Big\{(E_{jl|s}-E_{js|l})h^i_k+(E_{kl|s}-E_{ks|l})h^i_j\Big\}.
\end{eqnarray}
By (\ref{TP23}), (\ref{BR4}) and Corollary \ref{cor1}, we have
\be\label{BR5}
E_{jk|l}h^i_s-E_{jk|s}h^i_l=(E_{jl|s}-E_{js|l})h^i_k+(E_{kl|s}-E_{ks|l})h^i_j.
\ee
Putting $i=s$ in  (\ref{BR5}) and using $h^s_s=n-1$ and $h^s_l=\delta^s_l-F^{-2}y^sy_l$, we get
\[
(n-2)E_{jk|l}+F^{-2}H_{jk}y_l=(E_{jl|k}-F^{-2}H_{jl}y_k-E_{jk|l})+(E_{kl|j}-F^{-2}H_{kl}y_j-E_{jk|l}),
\]
or equivalently
\be \label{BR6}
nE_{jk|l}=E_{jl|k}+E_{kl|j}-F^{-2}(H_{jl}y_k-H_{kl}y_j+H_{jk}y_l).
\ee
By Lemma \ref{lem2}, (\ref{BR6}) reduces to following
\be \label{BR7}
nE_{jk|l}=E_{jl|k}+E_{kl|j}.
\ee
Permuting $j, k,l$ in (\ref{BR7}) leads to
\begin{eqnarray}
nE_{kl|j}\!\!\!\!&=&\!\!\!\!\ E_{kj|l}+E_{lj|k}\label{BR8}\\
nE_{lj|k}\!\!\!\!&=&\!\!\!\!\ E_{lk|j}+E_{jk|l}.\label{BR9}
\end{eqnarray}
(\ref{BR7})+(\ref{BR8})-(\ref{BR9}) yields
\be \label{BR10}
n(E_{jk|l}+E_{kl|j})=(n+2)E_{jl|k}.
\ee
Putting (\ref{BR7}) in  (\ref{BR10}) implies that
\be \label{BR11}
E_{kl|j}=E_{jl|k}.
\ee
This means that ${\bar E}_{ijk}$ is symmetric with respect to indices  and (\ref{BR7}) reduces to
\[
(n-2)E_{jk|l}=0.
\]
Since $n>2$, thus  ${\bf \bar E}=0$.

Conversely, let $F$ be a stretch metric with \  ${\bf \bar E}=0$. Then by (\ref{TP23}) and (\ref{BR4}), we conclude that $F$ is R-quadratic. This completes the proof.
\qed

\section{Proof of Theorem \ref{THM4}}
The following equation is hold
\begin{eqnarray}
L_{ijk|m}y^m + C_{ijm}R^m_{\ k}= - {1\over 3}(g_{im}R^m_{\ k, j}
+ g_{jm} R^m_{\ k, i}) - {1\over 6}( g_{im}R^m_{\ j, k}+g_{jm}R^m_{\ i, k}). \label{Moeq1}
\end{eqnarray}
For more details, see  \cite{MoSh}. Contracting (\ref{Moeq1}) with $g^{ij}$ implies that
\be
J_{k|m}y^m + I_mR^m_{\ k}  = -  {1\over 3}\Big \{ 2 R^m_{\ k, m} +  R^m_{\ m, k}\Big \} . \label{Moeq2}
\ee

\bigskip
\noindent
{\bf Proof of Theorem \ref{THM4}}:  Now we assume that $F$ is of scalar curvature with flag curvature ${\bf K}= {\bf K}(x, y)$. This is equivalent to the following identity:
\be
R^i_{\ k} = {\bf K} F^2 \; h^i_k, \label{Kikiso1}
\ee
where $h^i_k := g^{ij} h_{jk}$. By  (\ref{Moeq1}),  (\ref{Moeq2}) and (\ref{Kikiso1}), we obtain
\[
 L_{ijk|m}y^m =  - {1\over 3}F^2 \Big \{ {\bf K}_{, i} h_{jk}   + {\bf K}_{, j} h_{ik}  + {\bf K}_{, k} h_{ij} + 3 {\bf K} C_{ijk}\Big \}
\]
and
\be
J_{k|m}y^m = - {1\over 3}F^2\Big \{ (n+1) {\bf K}_{, k} + 3{\bf K} I_k \Big \}.\label{AZeq2}
\ee
By assumption, we have
\be
L_{jkl}=-\frac{1}{2}\mu FC_{jkl}.
\ee
This yields
\[
J_i=-\frac{1}{2}\mu FI_i.
\]
Since $J_k = I_{k|m}y^m$, thus
\be
J_{i|m}y^m=-\frac{\mu'}{2}FI_i-\frac{\mu}{2}FJ_i=\frac{1}{4}(\mu^2F-2\mu')FI_i.
\ee
It follows from (\ref{AZeq2}) that
\be
{n+1 \over 3} {\bf K}_{,i}=(\frac{\mu'}{2F}-\frac{\mu^2}{4}-{\bf K})I_i. \label{KKcI}
\ee
Then  we have (\ref{KKc}).
\qed

\bigskip

\begin{cor}
Let $(M, F)$ be a generalized isotropic Berwald manifold of dimension $n\geq 3$. Suppose that $F$ is of scalar flag curvature ${\bf K}={\bf K}(x, y)$ such that $2\mu'-\{\mu^2+4{\bf K}\}F=0$. Then $F$ is of constant flag curvature.
\end{cor}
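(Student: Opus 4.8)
The plan is to read off the conclusion directly from the identity (\ref{KKcI}) established in the proof of Theorem \ref{THM4}, which (up to renaming the free index) reads
\[
{n+1\over 3}\,{\bf K}_{,k}=\Big(\frac{\mu'}{2F}-\frac{\mu^2}{4}-{\bf K}\Big)I_k .
\]
The first step is purely algebraic: I rewrite the standing hypothesis $2\mu'-\{\mu^2+4{\bf K}\}F=0$ as $2\mu'=\{\mu^2+4{\bf K}\}F$ and divide by $4F$, obtaining $\frac{\mu'}{2F}=\frac{\mu^2}{4}+{\bf K}$. Hence the scalar coefficient $\frac{\mu'}{2F}-\frac{\mu^2}{4}-{\bf K}$ multiplying $I_k$ on the right-hand side vanishes identically, so the entire right-hand side is zero. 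Since $n\geq 3$ guarantees $\frac{n+1}{3}\neq 0$, I conclude ${\bf K}_{,k}=0$ for every $k$.

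This says the flag curvature ${\bf K}={\bf K}(x,y)$ is independent of the direction $y$, so $F$ is of isotropic flag curvature ${\bf K}={\bf K}(x)$. To upgrade this to genuine constancy I would invoke the Finslerian Schur lemma: on a connected manifold of dimension $n\geq 3$, a Finsler metric of isotropic flag curvature ${\bf K}={\bf K}(x)$ is automatically of constant flag curvature. Note that $n\geq 3$, which entered the first step only in the harmless role of keeping $\frac{n+1}{3}$ nonzero, is genuinely essential at this stage, since Schur's phenomenon fails in dimension two.

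I expect the only substantive ingredient to be this appeal to Schur's lemma; the reduction of (\ref{KKcI}) under the hypothesis is immediate. If a self-contained argument were preferred to citing Schur, the natural route would be to differentiate the scalar-curvature relation $R^i_{\ k}={\bf K}F^2 h^i_k$, that is (\ref{Kikiso1}), horizontally and substitute into the Bianchi identity (\ref{TP22}); a suitable contraction isolates a term proportional to ${\bf K}_{|m}y^m$, and once ${\bf K}_{,k}=0$ has removed the $y$-dependence this forces ${\bf K}$ to be constant along every geodesic, hence constant on $M$ by connectedness. Carrying out that contraction cleanly in the Berwald-connection formalism is the only place I anticipate any real work.
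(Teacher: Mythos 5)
Your proposal is correct and follows exactly the paper's own argument: substitute the hypothesis into (\ref{KKcI}) to get ${\bf K}_{,k}=0$, hence ${\bf K}={\bf K}(x)$, and then invoke the Finslerian Schur theorem (valid for $n\geq 3$) to conclude ${\bf K}$ is constant. The extra observations you add (that $n\geq 3$ is only needed for Schur, plus the sketch of a self-contained alternative) are sound but not part of the paper's proof, which cites \cite{BCS} for Schur exactly as you do.
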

\begin{proof}
By (\ref{KKcI}), we get ${\bf K}_{,i}=0$  and then ${\bf K}={\bf K}(x)$.  In this case, ${\bf K}= constant$ when $n \geq 3$ by the Schur theorem \cite{BCS}.
\end{proof}

\bigskip

Finally,  by  (\ref{KKcI}) we can conclude the following.

\begin{cor}
Let $(M, F)$ be a generalized isotropic Berwald manifold of isotropic  flag curvature ${\bf K}={\bf K}(x)$  satisfies $2\mu'-\{\mu^2+4{\bf K}\}F\neq0$.  Then $F$ reduces to a  Riemannian metric.
\end{cor}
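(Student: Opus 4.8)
The plan is to read the conclusion off directly from equation (\ref{KKcI}), which was established in the proof of Theorem \ref{THM4} and holds on any generalized isotropic Berwald manifold of scalar flag curvature. First I would feed in the isotropy hypothesis: since ${\bf K}={\bf K}(x)$ depends on position alone, its vertical ($v$-covariant) derivative vanishes, i.e. ${\bf K}_{,i}=0$. Substituting this into (\ref{KKcI}) annihilates the left-hand side and leaves the pointwise identity
\[
\Big(\frac{\mu'}{2F}-\frac{\mu^2}{4}-{\bf K}\Big)I_i=0 .
\]

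Next I would reconcile the scalar coefficient with the stated hypothesis. Multiplying the parenthesized factor by $4F>0$ produces exactly $2\mu'-(\mu^2+4{\bf K})F$, which is the quantity assumed to be nonzero. Hence the coefficient in the displayed identity is nowhere vanishing, and the identity forces $I_i=0$; that is, the mean Cartan torsion of $F$ vanishes identically.

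Finally I would invoke Deicke's theorem, which states that a Finsler metric is Riemannian if and only if its mean Cartan torsion is zero. Since $I_i=0$, this immediately gives that $F$ reduces to a Riemannian metric, which completes the argument.

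I do not expect a genuine obstacle here: the corollary is essentially an immediate specialization of (\ref{KKcI}) under the isotropy assumption. The only point deserving care is the bookkeeping that the nonvanishing factor $2\mu'-(\mu^2+4{\bf K})F$ is a strictly positive multiple (by $4F$) of the coefficient appearing in (\ref{KKcI}); this is what guarantees that the hypothesis truly forces $I_i=0$ rather than merely constraining it, and it is the reason the Deicke reduction can be applied.
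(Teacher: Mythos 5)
Your proposal is correct and is exactly the argument the paper intends: the paper offers no written proof beyond the remark that the corollary follows ``by (\ref{KKcI})'', and your computation---setting ${\bf K}_{,i}=0$, observing that the coefficient in (\ref{KKcI}) is $\frac{1}{4F}\bigl(2\mu'-(\mu^2+4{\bf K})F\bigr)$ and hence nonvanishing by hypothesis, so $I_i=0$---is precisely that one-line deduction spelled out. Your explicit appeal to Deicke's theorem to pass from vanishing mean Cartan torsion $I_i=0$ to $F$ being Riemannian is the right (and necessary) final step, which the paper leaves implicit.
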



\noindent
Akbar Tayebi\\
Department of Mathematics, Faculty  of Science\\
Qom University\\
Qom. Iran\\
Email:\ akbar.tayebi@gmail.com

\bigskip

\noindent
Esmaeil Peyghan\\
Department of Mathematics, Faculty  of Science\\
Arak University\\
Arak 38156-8-8349,  Iran\\
Email: epeyghan@gmail.com

\end{document}